\documentclass[11pt]{article}
\usepackage[margin=1in]{geometry}
\usepackage[T1]{fontenc}
\usepackage[utf8]{inputenc}
\usepackage{lmodern}
\usepackage{amsmath,amssymb,amsthm,mathtools}
\usepackage{bm}
\usepackage{physics}
\usepackage{enumitem}
\usepackage{hyperref}
\usepackage{xcolor}
\usepackage{tikz}
\usepackage{microtype}
\usepackage{mathrsfs}
\usepackage{authblk} 
\usepackage[numbers,sort&compress]{natbib} 

\hypersetup{
  colorlinks=true,
  linkcolor=blue!60!black,
  citecolor=blue!60!black,
  urlcolor=blue!60!black
}

\newtheorem{theorem}{Theorem}
\newtheorem{proposition}{Proposition}

\theoremstyle{definition}
\newtheorem{definition}{Definition}
\newtheorem{example}{Example}
\newtheorem{remark}{Remark}

\newcommand{\R}{\mathbb{R}}
\newcommand{\x}{\bm{x}}
\newcommand{\uvec}{\bm{u}}
\newcommand{\lam}{\bm{\lambda}}
\newcommand{\gvec}{\bm{g}}
\newcommand{\argmax}{\operatorname*{arg\,max}}
\newcommand{\argmin}{\operatorname*{arg\,min}}
\newcommand{\clamp}{\operatorname{clamp}}

\title{\textbf{Lecture Note for Bounded Controls in Continuous-Time and Control of Several Variables}}
\author{Louis Shuo Wang\thanks{Email: wang.s41@northeastern.edu}}
\affil{Department of Mathematics, Northeastern University, Boston, MA 02115, USA}

\date{Prepared for senior undergraduate Students}

\begin{document}
\maketitle

\begin{abstract}
In this note, we develop the first-order theory of optimal control problems with box constraints on the control. We emphasize the precise modification of Pontryagin's maximum principle when the admissible control set is compact, the projection/clamping formula for scalar quadratic Hamiltonians, the distinction between \emph{intrinsic projection inside the optimality system} and \emph{post hoc truncation of an unconstrained solution}, and the corresponding forward-backward sweep implementation. The presentation is pitched at senior PhD students who are already comfortable with variational arguments, adjoint systems, and basic nonlinear analysis. \textbf{These notes are mainly based on the book ``optimal control applied to biological models'' of Suzanne Lenhart and John T. Workman.}
\end{abstract}

\tableofcontents

\section{Part I}
\subsection{Motivation and problem class}

In many scientifically relevant control models, the control cannot range freely over all of $\R^m$. Physical, technological, pharmacological, ecological, or regulatory considerations impose pointwise bounds:
\[
u(t)\in U \subset \R^m,
\qquad
U = [a_1,b_1]\times \cdots \times [a_m,b_m].
\]
Examples include:
\begin{enumerate}[label=(\roman*)]
    \item dosage or treatment intensity: $0\le u_i(t)\le u_i^{\max}$,
    \item harvesting effort or extraction rate: $0\le u(t)\le \bar u$,
    \item actuator saturation in engineering: $\|u(t)\|_\infty \le \bar u$.
\end{enumerate}

A prototypical Bolza problem is
\begin{equation}
\label{eq:bolza}
\max_{u(\cdot)}\;
J(u)
=
\int_{t_0}^{t_1} f(t,x(t),u(t))\,dt + \phi(x(t_1))
\end{equation}
subject to
\begin{equation}
\label{eq:dynamics}
\dot x(t)=g(t,x(t),u(t)),
\qquad
x(t_0)=x_0,
\qquad
u(t)\in U \text{ a.e. on } [t_0,t_1].
\end{equation}
Throughout, $x:[t_0,t_1]\to \R^n$ is the state and $u:[t_0,t_1]\to \R^m$ is the control.

\medskip

The key message is simple but fundamental:

\begin{center}
\emph{For constrained controls, one does not merely solve the unconstrained stationarity equation and then clip the answer afterward. The control constraint must be enforced \textbf{inside} the optimality system.}
\end{center}

\subsection{Hamiltonian formalism with box constraints}

\begin{definition}[Hamiltonian]
For \eqref{eq:bolza}--\eqref{eq:dynamics}, define the Hamiltonian
\[
H(t,x,u,\lambda) := f(t,x,u) + \lambda^\top g(t,x,u),
\]
where $\lambda(\cdot)\in \R^n$ is the adjoint (costate).
\end{definition}

The unconstrained first-order condition often takes the form
\[
H_u(t,x^*,u^*,\lambda^*) = 0.
\]
With box constraints, this is generally false on intervals where the optimal control lies on the boundary of the admissible set.

\subsubsection{Constrained Pontryagin principle}

For a maximization problem, Pontryagin's principle states that the optimal control must maximize the Hamiltonian \emph{pointwise over the admissible control set}:
\begin{equation}
\label{eq:pmp-max}
u^*(t)\in \argmax_{v\in U} H(t,x^*(t),v,\lambda^*(t))
\qquad \text{for a.e. } t\in[t_0,t_1].
\end{equation}
The state and adjoint equations remain
\begin{align}
\dot x^*(t) &= H_\lambda(t,x^*(t),u^*(t),\lambda^*(t)),
\label{eq:state-pmp}
\\
\dot \lambda^*(t) &= - H_x(t,x^*(t),u^*(t),\lambda^*(t)),
\label{eq:adjoint-pmp}
\end{align}
with transversality
\[
\lambda^*(t_1)=\nabla \phi(x^*(t_1))
\]
when the terminal state is free and the final time is fixed.

\begin{remark}
For minimization problems, the same equations \eqref{eq:state-pmp}--\eqref{eq:adjoint-pmp} hold, but \eqref{eq:pmp-max} is replaced by pointwise minimization:
\[
u^*(t)\in \argmin_{v\in U} H(t,x^*(t),v,\lambda^*(t)).
\]
\end{remark}

\subsection{Scalar box constraint: complementary sign conditions}

Assume a scalar control $u\in[a,b]$ and $H$ is differentiable in $u$.
Then the constrained maximum condition is equivalent to the familiar sign logic:
\begin{equation}
\label{eq:sign-conditions-max}
\begin{cases}
H_u(t,x^*,u^*,\lambda^*) < 0 \;\Rightarrow\; u^*(t)=a,\\[0.25em]
H_u(t,x^*,u^*,\lambda^*) = 0 \;\Rightarrow\; a\le u^*(t)\le b,\\[0.25em]
H_u(t,x^*,u^*,\lambda^*) > 0 \;\Rightarrow\; u^*(t)=b.
\end{cases}
\end{equation}
Equivalently,
\begin{equation}
\label{eq:complementary-max}
\begin{cases}
u^*(t)=a \;\Rightarrow\; H_u(t,x^*,u^*,\lambda^*)\le 0,\\[0.25em]
a<u^*(t)<b \;\Rightarrow\; H_u(t,x^*,u^*,\lambda^*)=0,\\[0.25em]
u^*(t)=b \;\Rightarrow\; H_u(t,x^*,u^*,\lambda^*)\ge 0.
\end{cases}
\end{equation}

For minimization, the inequalities reverse:
\begin{equation}
\label{eq:sign-conditions-min}
\begin{cases}
H_u > 0 \;\Rightarrow\; u^*=a,\\
H_u = 0 \;\Rightarrow\; a\le u^*\le b,\\
H_u < 0 \;\Rightarrow\; u^*=b.
\end{cases}
\end{equation}

\subsubsection{Variational derivation in one paragraph}

Let $u^*$ be optimal and let $u_\varepsilon=u^*+\varepsilon h$ be an admissible one-sided perturbation with $\varepsilon>0$ small. The first variation satisfies
\[
0 \ge \frac{d}{d\varepsilon}J(u_\varepsilon)\bigg|_{\varepsilon=0^+}
= \int_{t_0}^{t_1} H_u(t,x^*,u^*,\lambda^*)\, h(t)\,dt.
\]
At a time $s$ where $u^*(s)<b$, one may choose a nonnegative bump $h$ concentrated near $s$. Therefore $H_u(s)\le 0$. Similarly, if $u^*(s)>a$, one may choose a nonpositive bump, yielding $H_u(s)\ge 0$. Combined, these imply \eqref{eq:complementary-max}. This is the one-dimensional box-constrained version of the Hamiltonian maximization condition.

\subsection{Projection formula for quadratic Hamiltonians}

A large class of biological, economic, and engineering models leads to Hamiltonians that are strictly concave or convex in $u$. For a scalar maximization problem, suppose
\[
H(t,x,u,\lambda)
=
\Psi(t,x,\lambda) + \beta(t,x,\lambda)\,u - \frac{\rho}{2}u^2,
\qquad \rho>0.
\]
Then the unconstrained maximizer is
\[
\tilde u(t)=\frac{\beta(t,x,\lambda)}{\rho}.
\]
Imposing $u\in[a,b]$ gives the exact constrained maximizer
\begin{equation}
\label{eq:projection}
u^*(t)
=
\clamp_{[a,b]}\!\left(\tilde u(t)\right)
=
\min\{b,\max\{a,\tilde u(t)\}\}.
\end{equation}

\begin{proposition}[Projection formula]
If $H(t,x,\cdot,\lambda)$ is strictly concave and quadratic in $u$, then \eqref{eq:projection} is the unique maximizer of $H$ over $[a,b]$.
\end{proposition}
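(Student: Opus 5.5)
Fix $t$ (and hence $x,\lambda$), and write $h(u):=H(t,x,u,\lambda)=\Psi+\beta u-\tfrac{\rho}{2}u^2$ with $\rho>0$ and $a\le b$. The plan is to treat this as a one-dimensional problem: maximize a strictly concave parabola over a compact interval. Existence of a maximizer follows from continuity of $h$ on the compact set $[a,b]$. Uniqueness follows from strict concavity. If $u_1\neq u_2$ were both maximizers with common value $M$, then $h\big(\tfrac{u_1+u_2}{2}\big)>M$. The midpoint lies in $[a,b]$, which contradicts maximality. So all the work is in showing that the clamp value $u^*=\min\{b,\max\{a,\tilde u\}\}$ is a maximizer.

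For this I would complete the square:
\[
h(u)=\Psi+\frac{\beta^2}{2\rho}-\frac{\rho}{2}\,(u-\tilde u)^2,\qquad \tilde u=\beta/\rho .
\]
Maximizing $h$ over $[a,b]$ is then equivalent to minimizing $|u-\tilde u|$ over $[a,b]$, i.e.\ to finding the Euclidean projection of $\tilde u$ onto the interval. I would verify that this projection is the clamp by splitting into three cases.
\begin{itemize}
\item If $\tilde u<a$, then every $u\in[a,b]$ satisfies $|u-\tilde u|=u-\tilde u\ge a-\tilde u$, with equality only at $u=a$. In this case the clamp equals $a$.
\item If $\tilde u>b$, the symmetric argument gives the minimizer $b$, which again equals the clamp.
\item If $a\le\tilde u\le b$, the distance is zero at $u=\tilde u$, and the clamp returns $\tilde u$.
\end{itemize}

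As a cross-check against the sign conditions \eqref{eq:complementary-max}, note that $h'(u)=\rho(\tilde u-u)$. In the first case $h'(a)<0$, consistent with $u^*=a$. In the third case $h'(u^*)=0$, and in the second case $h'(b)>0$. This also shows that the projection formula satisfies the first-order system derived earlier.

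The only step needing care, and it is mild, is making sure the statement's hypothesis "strictly concave and quadratic" really yields the form above with $\rho>0$. Any quadratic in $u$ can be written $c_0+c_1u+c_2u^2$. Strict concavity forces $c_2<0$, so we may set $\rho=-2c_2>0$ and $\beta=c_1$, absorbing $c_0$ into $\Psi$. After that reduction, the degenerate interval $a=b$ is trivial, and the rest is the case analysis above.
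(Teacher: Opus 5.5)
Your proof is correct. It reaches the same three-case split as the paper, but by a different mechanism.

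The paper argues by monotonicity: a strictly concave function increases up to its unconstrained maximizer $\tilde u$ and decreases after it. So when $\tilde u<a$ the Hamiltonian is decreasing on all of $[a,b]$ and the maximum sits at $a$, and symmetrically for $\tilde u>b$.

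You instead complete the square, $h(u)=\Psi+\beta^2/(2\rho)-\tfrac{\rho}{2}(u-\tilde u)^2$. This turns the problem into nearest-point projection of $\tilde u$ onto $[a,b]$, which makes the name ``projection formula'' literal. It also yields existence and uniqueness inside each case, since each case exhibits a unique minimizer of $|u-\tilde u|$. Your separate compactness and midpoint paragraph is therefore correct but not needed.

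The two routes generalize in different directions.
\begin{itemize}
\item The paper's monotonicity argument uses nothing about the quadratic form beyond the existence of $\tilde u$. It therefore shows verbatim that for any strictly concave $H(t,x,\cdot,\lambda)$ with an unconstrained maximizer, the constrained maximizer over an interval is the clamp of that maximizer.
\item Your identity is tied to quadratics but extends to vector controls. For $H=\Psi+\beta^\top u-\tfrac12 u^\top R u$ with $R\succ 0$, the same computation shows that the maximizer over a closed convex $U$ is the projection of $R^{-1}\beta$ onto $U$ in the $R$-weighted norm.
\item On a box, that projection reduces to componentwise clamping when $R$ is diagonal, in line with the separability caveat in the paper's normal-cone subsection.
\end{itemize}

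Your explicit check that strict concavity forces a negative leading coefficient, so that $\rho>0$ can be defined pointwise, is a detail the paper takes for granted.
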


\begin{proof}
Since $H$ is strictly concave in $u$, any local maximizer is the unique global maximizer. The unconstrained critical point $\tilde u$ is unique. If $\tilde u\in[a,b]$, it is feasible and optimal. If $\tilde u<a$, strict concavity implies $H$ is increasing up to $\tilde u$ and decreasing thereafter, so the constrained maximizer is the left endpoint $a$. The case $\tilde u>b$ is analogous.
\end{proof}

\begin{remark}
The ``projection'' in \eqref{eq:projection} is not a heuristic numerical correction. It is the exact pointwise optimizer of the Hamiltonian over the admissible interval.
\end{remark}

\subsection{Bang-bang structure and switching functions}

When the Hamiltonian is affine in the control,
\[
H(t,x,u,\lambda)
=
\Psi(t,x,\lambda) + \sigma(t)\,u,
\qquad u\in[a,b],
\]
the pointwise maximization problem is trivial:
\begin{equation}
\label{eq:bang-bang}
u^*(t)=
\begin{cases}
b, & \sigma(t)>0,\\
[a,b], & \sigma(t)=0,\\
a, & \sigma(t)<0.
\end{cases}
\end{equation}
The scalar function $\sigma(t):=H_u(t,x^*,u^*,\lambda^*)$ is called the \emph{switching function}. If its zeros are isolated, the optimal control is bang-bang: it sits on one bound and switches to the other at isolated times.

\begin{remark}[Singular arcs]
If $\sigma(t)\equiv 0$ on a nontrivial interval, first-order PMP does not determine $u^*$ there. One must differentiate the switching function until the control enters, or invoke higher-order optimality conditions such as Legendre--Clebsch type criteria.
\end{remark}

\subsection{Worked example: linear state dynamics, quadratic running cost}

Consider the maximization problem
\begin{equation}
\label{eq:ex-problem}
\max_{0\le u\le 2}\;
J(u)
=
\int_0^T \big(\alpha x(t)-\tfrac12 r u(t)^2\big)\,dt
\end{equation}
subject to
\[
\dot x(t)=a x(t)+b u(t),\qquad x(0)=x_0,
\qquad \alpha,r>0.
\]

\subsubsection*{Hamiltonian}
\[
H(x,u,\lambda)=\alpha x - \frac12 r u^2 + \lambda(ax+bu).
\]

\subsubsection*{Adjoint equation}
\[
\dot\lambda(t)=-H_x=-(\alpha+a\lambda(t)),
\qquad
\lambda(T)=0.
\]
Thus
\[
\lambda(t)=
\begin{cases}
\displaystyle \frac{\alpha}{a}\Big(e^{a(T-t)}-1\Big), & a\neq 0,\\[0.75em]
\alpha(T-t), & a=0.
\end{cases}
\]

\subsubsection*{Control characterization}
\[
H_u = -r u + b\lambda.
\]
The unconstrained maximizer is
\[
\tilde u(t)=\frac{b}{r}\lambda(t).
\]
Hence the constrained optimizer is
\[
u^*(t)=\min\left\{2,\max\left\{0,\frac{b}{r}\lambda(t)\right\}\right\}.
\]

\subsubsection*{Detailed Interpretation of the Control Structure}

The optimal constrained control $u^*(t)$ is strictly determined by the exact projection of the unconstrained ideal control $\tilde u(t) = \frac{b}{r}\lambda(t)$ onto the admissible interval $[0, 2]$. Because the adjoint variable carries a terminal condition $\lambda(T)=0$ and is typically strictly positive and monotonically decaying for $t < T$, the control trajectory naturally partitions into three distinct phases. This yields the canonical \emph{bound--interior--bound} structure:

\begin{enumerate}[label=(\roman*)]
    \item \textbf{Initial Upper Bound (Saturation):} Early in the control horizon ($t \ll T$), the future marginal value of the state is high, making $\lambda(t)$ sufficiently large such that $\tilde u(t) > 2$. The physical constraints of the system become active, and the optimal action is to saturate the input: $u^*(t) = 2$.
    \item \textbf{Interior Arc (Unconstrained Operation):} As time progresses toward the terminal horizon, the value of the adjoint $\lambda(t)$ diminishes. Once the ideal control drops into the feasible region ($0 < \tilde u(t) < 2$), the bounds become inactive. The optimal control strictly follows the continuous, unconstrained feedback law: $u^*(t) = \tilde u(t)$.
    \item \textbf{Terminal Lower Bound (Decay to Zero):} Approaching the terminal time $T$, the residual value of the state drops to zero ($\lambda(T)=0$). Consequently, the ideal control $\tilde u(t)$ decays entirely. The optimal control smoothly tracks this decay, hitting the lower bound $0$ at exactly $t=T$ (or earlier, if $\lambda(t)$ were to become non-positive).
\end{enumerate}

The following diagram geometrically illustrates this projection mechanism, highlighting why bounded optimal control necessitates a piecewise analytical structure rather than a simple post hoc truncation of the final trajectory.

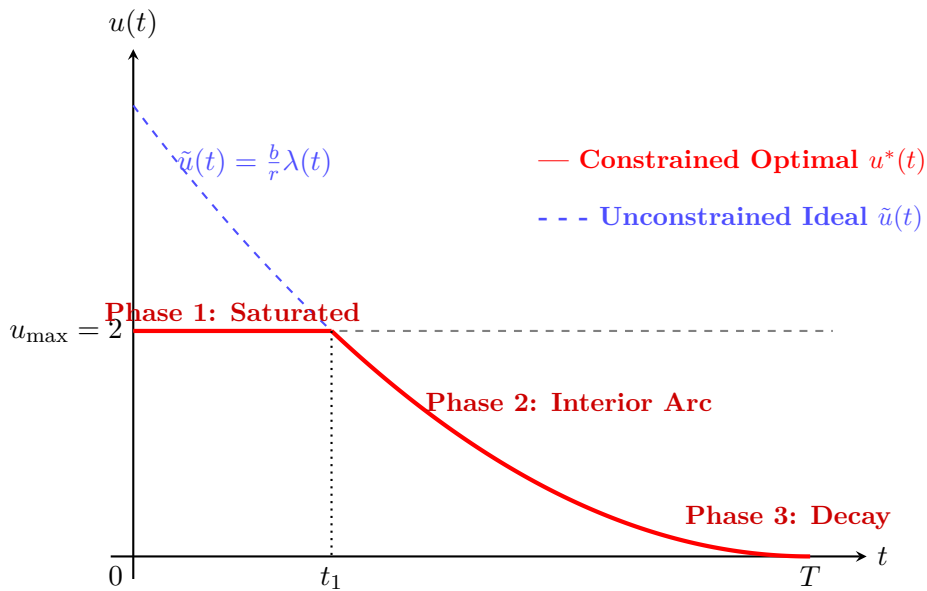
\begin{figure}[htbp]
\centering
\begin{tikzpicture}[>=stealth, scale=1.5]
    \draw[->, thick] (-0.2, 0) -- (6.5, 0) node[right] {$t$};
    \draw[->, thick] (0, -0.2) -- (0, 4.5) node[above] {$u(t)$};
    \node[below left] at (0,0) {$0$};

    \draw[dashed, gray, thick] (0, 2) node[left, text=black] {$u_{\max} = 2$} -- (6.2, 2);

    \draw[blue!70, thick, dashed, domain=0:6, samples=100] 
        plot (\x, {4*(1-\x/6)^2});
        
    \node[text=blue!70, right] at (0.3, 3.5) {$\tilde{u}(t) = \frac{b}{r}\lambda(t)$};

    \def\tcut{1.757}

    \draw[red, ultra thick] (0, 2) -- (\tcut, 2);
    \draw[red, ultra thick, domain=\tcut:6, samples=100] 
        plot (\x, {4*(1-\x/6)^2});

    \draw[dotted, thick] (\tcut, 2) -- (\tcut, 0) node[below] {$t_1$};
    \node[below] at (6, 0) {$T$};

    \node[above, text=red!80!black, font=\small\bfseries] at (\tcut/2, 2) {Phase 1: Saturated};
    \node[above right, text=red!80!black, font=\small\bfseries] at (2.5, 1.2) {Phase 2: Interior Arc};
    \node[above right, text=red!80!black, font=\small\bfseries] at (4.8, 0.15) {Phase 3: Decay};

    \node[anchor=west, align=left, text=red, font=\small\bfseries] at (3.5, 3.5) {--- Constrained Optimal $u^*(t)$};
    \node[anchor=west, align=left, text=blue!70, font=\small\bfseries] at (3.5, 3.0) {- - - Unconstrained Ideal $\tilde{u}(t)$};
\end{tikzpicture}
\caption{The projection formula in action. The unconstrained ideal control $\tilde{u}(t)$ (driven by the adjoint $\lambda(t)$) is dynamically clamped to the $[0, 2]$ interval, producing the piece-wise bound--interior--bound optimal trajectory.}
\label{fig:bound-interior-bound}
\end{figure}

\subsection{Why post hoc truncation is generally wrong}

A frequent mistake in applied work is:
\begin{enumerate}[label=(\roman*), leftmargin=2em]
    \item solve the unconstrained optimality system,
    \item obtain $\hat u$ that violates the bounds,
    \item define $u^{\text{clip}}(t)=\clamp_{[a,b]}(\hat u(t))$ afterward.
\end{enumerate}
This is generally \emph{not} the optimal constrained control.

\begin{remark}[Structural reason]
The state and adjoint satisfy
\[
\dot x = g(t,x,u),\qquad \dot\lambda = -H_x(t,x,u,\lambda),
\]
so changing the control changes both $x$ and $\lambda$, which in turn changes the control law. The constrained problem has a different coupled boundary-value structure from the unconstrained problem. Projection must therefore occur \emph{within each iterate of the solve}, not after the fact.
\end{remark}

\subsubsection*{A useful slogan}
\begin{center}
\emph{Project the control law, not the finished trajectory.}
\end{center}

\subsection{Forward-backward sweep with box constraints}

The forward-backward sweep method (FBSM) is a standard indirect numerical scheme for solving PMP systems.

Let $u^{(k)}$ be the current control iterate.

\begin{enumerate}[label=(\roman*)]
    \item \textbf{Forward state solve:}
    solve
    \[
    \dot x^{(k)}(t)=g\big(t,x^{(k)}(t),u^{(k)}(t)\big),
    \qquad x^{(k)}(t_0)=x_0.
    \]

    \item \textbf{Backward adjoint solve:}
    solve
    \[
    \dot\lambda^{(k)}(t)= -H_x\big(t,x^{(k)}(t),u^{(k)}(t),\lambda^{(k)}(t)\big),
    \qquad \lambda^{(k)}(t_1)=\nabla\phi(x^{(k)}(t_1)).
    \]

    \item \textbf{Control update:}
    compute the pointwise optimizer
    \[
    \tilde u^{(k+1)}(t)
    \in
    \argmax_{v\in U}
    H\big(t,x^{(k)}(t),v,\lambda^{(k)}(t)\big),
    \]
    and then often damp it:
    \[
    u^{(k+1)}(t)=\theta\, \tilde u^{(k+1)}(t)+(1-\theta)u^{(k)}(t),
    \qquad 0<\theta\le 1.
    \]
\end{enumerate}

If the unconstrained scalar update is $\tilde u = \Gamma(t,x,\lambda)$ and the admissible interval is $[a,b]$, then the bounded update is
\[
\tilde u^{(k+1)}(t)=\min\{b,\max\{a,\Gamma(t,x^{(k)},\lambda^{(k)})\}\}.
\]
This is the exact mathematical content of the constrained maximization step.

\begin{remark}
In practice, damping is often essential for stability, especially for stiff dynamics, long horizons, or highly nonlinear state-costate coupling.
\end{remark}

\subsection{Multi-input controls and normal cones}

For $u\in U\subset \R^m$ with $U$ closed and convex, the pointwise maximization condition can be expressed as
\[
0 \in -H_u(t,x^*,u^*,\lambda^*) + N_U(u^*(t)),
\]
where $N_U(u)$ is the normal cone to $U$ at $u$. This is the natural variational inequality form of the control law. For a box
\[
U=[a_1,b_1]\times \cdots \times [a_m,b_m],
\]
the control law decouples componentwise whenever $H$ is separable in the control coordinates.

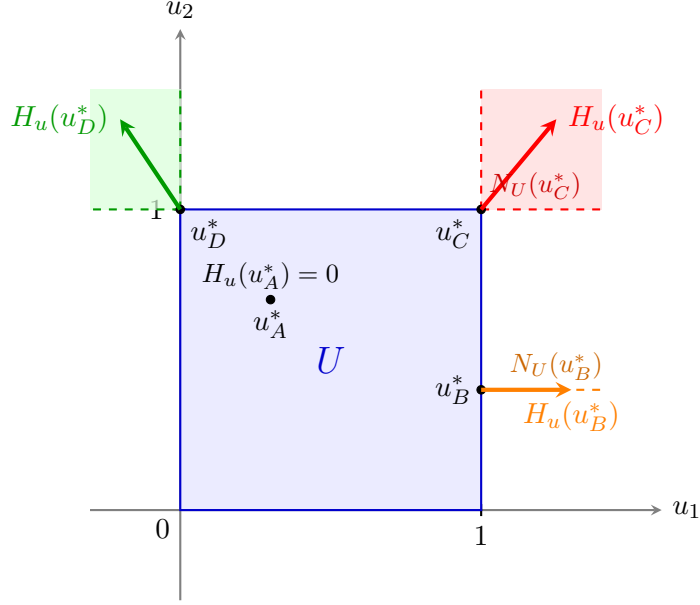
\begin{figure}[htbp]
\centering
\begin{tikzpicture}[scale=4, >=stealth]
    \draw[->, thick, gray] (-0.3, 0) -- (1.6, 0) node[right, text=black] {$u_1$};
    \draw[->, thick, gray] (0, -0.3) -- (0, 1.6) node[above, text=black] {$u_2$};

    \draw[thick] (1, 0.02) -- (1, -0.02) node[below] {$1$};
    \draw[thick] (0.02, 1) -- (-0.02, 1) node[left] {$1$};
    \node[below left] at (0,0) {$0$};

    \fill[blue!8] (0,0) rectangle (1,1);
    \draw[blue!80!black, thick] (0,0) rectangle (1,1);
    \node[blue!80!black, font=\Large\bfseries] at (0.5, 0.5) {$U$};

    \fill[red!15, opacity=0.7] (1,1) rectangle (1.4, 1.4);
    \draw[red, dashed, thick] (1,1) -- (1.4, 1);
    \draw[red, dashed, thick] (1,1) -- (1, 1.4);
    
    \filldraw[black] (1, 1) circle (0.4pt) node[below left] {$u^*_C$};
    \draw[->, ultra thick, red] (1,1) -- (1.25, 1.3) node[right] {$H_u(u^*_C)$};
    \node[red!80!black, font=\small] at (1.18, 1.08) {$N_U(u^*_C)$};

    \draw[orange, dashed, thick] (1,0.4) -- (1.4, 0.4);
    \node[orange!80!black, font=\small, above] at (1.25, 0.4) {$N_U(u^*_B)$};
    
    \filldraw[black] (1, 0.4) circle (0.4pt) node[left] {$u^*_B$};
    \draw[->, ultra thick, orange] (1,0.4) -- (1.3, 0.4) node[below] {$H_u(u^*_B)$};

    \filldraw[black] (0.3, 0.7) circle (0.4pt) node[below] {$u^*_A$};
    \node[above, font=\small, text=black] at (0.3, 0.7) {$H_u(u^*_A) = 0$};
    
    \fill[green!15, opacity=0.7] (0,1) rectangle (-0.3, 1.4);
    \draw[green!60!black, dashed, thick] (0,1) -- (-0.3, 1);
    \draw[green!60!black, dashed, thick] (0,1) -- (0, 1.4);
    
    \filldraw[black] (0, 1) circle (0.4pt) node[below right] {$u^*_D$};
    \draw[->, ultra thick, green!60!black] (0,1) -- (-0.2, 1.3) node[left] {$H_u(u^*_D)$};

\end{tikzpicture}
\caption{Geometric interpretation of the variational inequality $H_u(u^*) \in N_U(u^*)$ for a 2D box constraint $U=[0,1]^2$. 
\textbf{(Interior $u^*_A$):} The normal cone is $\{0\}$, implying stationarity $H_u=0$. 
\textbf{(Edge $u^*_B$):} The control $u_1$ is saturated at $1$, so the gradient $H_{u_1}$ must point outwards (positive), while $H_{u_2}=0$. 
\textbf{(Corners $u^*_C, u^*_D$):} Multiple constraints are active. At $u^*_C=(1,1)$, the normal cone is the first quadrant (shifted), meaning both components of the gradient $H_u$ must be non-negative.}
\label{fig:normal-cone}
\end{figure}

\subsection{A compact theorem statement}

\begin{theorem}[PMP with box-constrained scalar control]
Consider the maximization problem
\[
\max_{u(\cdot)} \int_{t_0}^{t_1} f(t,x(t),u(t))\,dt + \phi(x(t_1))
\]
subject to
\[
\dot x(t)=g(t,x(t),u(t)),\qquad x(t_0)=x_0,\qquad a\le u(t)\le b.
\]
Assume regularity sufficient for Pontryagin's principle. If $(x^*,u^*)$ is optimal, then there exists an absolutely continuous adjoint $\lambda$ such that
\[
\dot x^* = H_\lambda(t,x^*,u^*,\lambda),\qquad
\dot\lambda = -H_x(t,x^*,u^*,\lambda),
\qquad
\lambda(t_1)=\nabla\phi(x^*(t_1)),
\]
and
\[
u^*(t)\in \argmax_{v\in[a,b]} H(t,x^*(t),v,\lambda(t))
\quad \text{for a.e. } t.
\]
If $H$ is differentiable with respect to $u$, then for a.e. $t$,
\[
u^*(t)=a \Rightarrow H_u\le 0,\qquad
a<u^*(t)<b \Rightarrow H_u=0,\qquad
u^*(t)=b \Rightarrow H_u\ge 0.
\]
\end{theorem}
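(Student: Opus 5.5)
The plan has two parts. The first assertion (existence of $\lambda$ and pointwise Hamiltonian maximization) I will take from the constrained Pontryagin principle \eqref{eq:pmp-max}--\eqref{eq:adjoint-pmp}. I specialize it to $U=[a,b]$ under the standing regularity hypotheses, which is what the statement means by ``regularity sufficient for Pontryagin's principle.'' If a self-contained argument is wanted, I would use needle (spike) variations. Fix a Lebesgue point $s$ of $t\mapsto f(t,x^*,u^*)$ and $t\mapsto g(t,x^*,u^*)$, fix $v\in[a,b]$, and set $u_\varepsilon=v$ on $[s-\varepsilon,s]$ and $u_\varepsilon=u^*$ elsewhere. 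Gronwall then gives
\[
x_\varepsilon(t)=x^*(t)+\varepsilon y(t)+o(\varepsilon)\quad\text{for } t\ge s,
\]
where $y$ solves the linearized equation $\dot y=g_x y$ with $y(s)=g(s,x^*(s),v)-g(s,x^*(s),u^*(s))$. Next define $\lambda$ as the solution of $\dot\lambda=-H_x$ with $\lambda(t_1)=\nabla\phi(x^*(t_1))$. Then $\frac{d}{dt}(\lambda^\top y)=-f_x y$, and this identity turns the first-order change in $J$ into
\[
\varepsilon\big[H(s,x^*,v,\lambda)-H(s,x^*,u^*,\lambda)\big]+o(\varepsilon)\le 0 .
\]
Dividing by $\varepsilon$ and letting $\varepsilon\to0$ gives the argmax condition at $s$. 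Since a.e. $s$ is a Lebesgue point, the condition holds a.e. Note that needle variations, not the convex perturbations used earlier, are what make this work without differentiability in $u$. They remain admissible because they only require $v\in U$.

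The second assertion is elementary once the argmax condition holds. Fix $t$ in the full-measure set where it holds, and set $\psi(v)=H(t,x^*(t),v,\lambda(t))$. Then $\psi$ is differentiable on $[a,b]$ and is maximized at $u^*(t)$. I split into three cases.
\begin{itemize}
\item If $a<u^*(t)<b$, the maximizer is interior, so Fermat's theorem gives $\psi'(u^*)=0$.
\item If $u^*(t)=a$, then $\psi(a+h)\le\psi(a)$ for small $h>0$. Hence the one-sided difference quotient is $\le 0$, and $H_u=\psi'(a)\le0$.
\item If $u^*(t)=b$, the same argument applied to $\psi(b-h)\le\psi(b)$ gives $H_u\ge0$.
\end{itemize}
This recovers \eqref{eq:complementary-max}.

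The only real obstacle is the first part: justifying the needle-variation expansion uniformly and checking the duality identity $\lambda^\top y$. Both require Lipschitz/$C^1$ assumptions on $f,g,\phi$ in $x$ and boundedness on bounded sets. These are exactly what ``regularity sufficient'' is meant to encode, so I would state them explicitly and defer the Gronwall estimates to a standard reference (e.g.\ Lenhart--Workman). The one-sided variational argument sketched earlier in the notes also gives the sign conditions directly. However, it needs $H_u$ to exist along the trajectory, and it yields only a first-order necessary condition, not the full argmax. That is why I route the proof through the needle variation.
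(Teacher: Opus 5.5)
Your proof is correct, but it is organized differently from the notes. The paper gives no standalone proof of this theorem. The state--adjoint system and the argmax condition are simply the constrained Pontryagin principle \eqref{eq:pmp-max}--\eqref{eq:adjoint-pmp}, taken as given under ``regularity sufficient.'' The sign conditions come from the separate one-paragraph variational derivation. That derivation uses one-sided convex perturbations $u_\varepsilon=u^*+\varepsilon h$ with a signed bump $h$, and reads off $H_u\le 0$ where $u^*<b$ and $H_u\ge 0$ where $u^*>a$ from $0\ge\int H_u h\,dt$. So in the paper the second assertion comes directly from the first variation of $J$, independently of the argmax. In your version it is a pointwise corollary of the argmax, obtained by one-variable calculus.

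Your route gives more. Needle variations yield the global maximization over $[a,b]$ with no differentiability in $u$, and the argument would work for any control set, convex or not. The Lebesgue-point device makes the exceptional null set independent of $v$, so the sign conditions hold at every point where the argmax holds, with no extra measure-theoretic work.

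The paper's bump argument, by contrast, gives only first-order local information. Since $u^*$ is merely measurable, it should really be run with $h=\mathbf{1}_{E}$ on a positive-measure set such as $E=\{u^*<b-\delta\}\cap\{H_u>0\}$, rather than with a bump ``near $s$.'' What the paper's route offers is economy. Its directional derivative $\frac{d}{d\varepsilon}J(u_\varepsilon)\big|_{0^+}=\int H_u h\,dt$ uses the same adjoint pairing with the linearized state (now driven by $g_u h$). It thereby avoids the $o(\varepsilon)$ spike-variation expansion that you defer to a reference.
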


\section{Part II}

\subsection{Motivation and Scope}

Many modern control problems are intrinsically multi-dimensional. In mathematical biology one tracks several interacting populations; in epidemiology one may control vaccination, treatment, and isolation simultaneously; in engineering one often regulates a vector-valued state with multiple actuators. The scalar one-state/one-control formalism is therefore only a warm-up. The correct framework is vector-valued state dynamics together with vector-valued controls, and the natural analytical object is the Hamiltonian associated with the control system.

In Part II we develop four themes:

\begin{enumerate}[label=(\roman*)]
    \item Pontryagin necessary conditions for problems with multiple states and controls.
    \item The finite-horizon continuous-time linear quadratic regulator (LQR) and the Riccati differential equation.
    \item Reduction of higher-order differential equations to first-order systems.
    \item Isoperimetric constraints and their conversion into endpoint constraints via an auxiliary state.
\end{enumerate}

The presentation assumes familiarity with ordinary differential equations, functional analysis at the level of weak compactness heuristics, and basic optimal control.

\subsection{General Multi-State, Multi-Control Optimal Control Problems}

Let the state be $\x(t)\in\R^n$ and the control be $\uvec(t)\in\R^m$. Consider the finite-horizon problem
\begin{equation}
\max_{\uvec(\cdot)} \left\{
J(\uvec):=\int_{t_0}^{t_1} f(t,\x(t),\uvec(t))\,dt + \phi(\x(t_1))
\right\}
\end{equation}
subject to
\begin{equation}
\dot{\x}(t)=\gvec(t,\x(t),\uvec(t)), \qquad \x(t_0)=\x_0.
\end{equation}
Here
\[
f:[t_0,t_1]\times\R^n\times\R^m\to\R,\qquad
\gvec:[t_0,t_1]\times\R^n\times\R^m\to\R^n,
\]
and $\phi:\R^n\to\R$ are assumed continuously differentiable unless stated otherwise.

\subsubsection{Hamiltonian Formalism}

Introduce the adjoint (costate) vector $\lam(t)\in\R^n$. The Hamiltonian is
\begin{equation}
H(t,\x,\uvec,\lam)=f(t,\x,\uvec)+\lam^\top \gvec(t,\x,\uvec).
\end{equation}

This compactly encodes both the dynamics and the objective.

\subsubsection{Pontryagin Necessary Conditions}

\begin{theorem}[Pontryagin principle, smooth finite-horizon form]
Suppose $\uvec^*$ is an optimal control with corresponding state $\x^*$. Then there exists an absolutely continuous adjoint $\lam:[t_0,t_1]\to\R^n$ such that, for almost every $t\in[t_0,t_1]$,
\begin{align}
\dot{\x}^*(t) &= \pdv{H}{\lam}(t,\x^*(t),\uvec^*(t),\lam(t))
= \gvec(t,\x^*(t),\uvec^*(t)), \\
\dot{\lam}(t) &= -\pdv{H}{\x}(t,\x^*(t),\uvec^*(t),\lam(t)), \\
\uvec^*(t) &\in \argmax_{\uvec\in U} H(t,\x^*(t),\uvec,\lam(t)),
\end{align}
with terminal condition
\begin{equation}
\lam(t_1)=\nabla \phi(\x^*(t_1)),
\end{equation}
provided the terminal state is free and $U\subseteq\R^m$ is the admissible control set.
\end{theorem}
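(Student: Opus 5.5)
The plan is the classical needle-variation argument, organized around the adjoint as the object that transports terminal sensitivity backward. I first define $\lam$ as the solution of the linear terminal-value problem
\[
\dot{\lam}(t) = -\pdv{H}{\x}(t,\x^*(t),\uvec^*(t),\lam(t)), \qquad \lam(t_1)=\nabla\phi(\x^*(t_1)).
\]
This ODE is linear in $\lam$, with coefficients $f_{\x}$ and $\gvec_{\x}$ evaluated along the optimal pair. Under the standing $C^1$ assumptions, and with $\uvec^*$ measurable and bounded, these coefficients are integrable, so a unique absolutely continuous solution exists by Carathéodory theory. With this choice the state equation $\dot{\x}^*=\pdv{H}{\lam}=\gvec$ and the transversality condition hold by construction. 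All the content therefore lies in proving the pointwise maximum condition $\uvec^*(t)\in\argmax_{\uvec\in U}H$.

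For the maximum condition I will \emph{not} reuse the additive perturbation $\uvec^*+\varepsilon h$ from the variational paragraph of Part I. That argument needs convexity of $U$ and only yields the first-order (normal cone) condition, not global maximization over $U$. Instead I fix a Lebesgue point $s\in(t_0,t_1)$ of the relevant integrands, an arbitrary $v\in U$, and form the needle control $\uvec_\varepsilon=v$ on $(s-\varepsilon,s]$ and $\uvec_\varepsilon=\uvec^*$ elsewhere. Since $\uvec_\varepsilon$ is admissible, $J(\uvec_\varepsilon)\le J(\uvec^*)$. The key step is the first-order expansion in $\varepsilon$. Let $\bm{y}(t)$ solve the linearized (variational) equation $\dot{\bm{y}}=\gvec_{\x}(t,\x^*,\uvec^*)\bm{y}$ on $[s,t_1]$ with jump initial datum
\[
\bm{y}(s)=\gvec(s,\x^*(s),v)-\gvec(s,\x^*(s),\uvec^*(s)).
\]
Then $\x_\varepsilon(t)=\x^*(t)+\varepsilon\bm{y}(t)+o(\varepsilon)$ uniformly on $[s,t_1]$, and $\x_\varepsilon=\x^*$ on $[t_0,s-\varepsilon]$.

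Expanding the cost then gives
\[
J(\uvec_\varepsilon)-J(\uvec^*)=\varepsilon\Big[f(s,\x^*,v)-f(s,\x^*,\uvec^*)+\int_s^{t_1} f_{\x}\,\bm{y}\,dt+\nabla\phi(\x^*(t_1))^\top\bm{y}(t_1)\Big]+o(\varepsilon).
\]
This is where the adjoint does its job. Differentiating $\lam^\top\bm{y}$ and using the adjoint equation gives $\frac{d}{dt}(\lam^\top\bm{y})=-f_{\x}\bm{y}$. Integrating from $s$ to $t_1$ and using the transversality condition collapses the integral and terminal terms into $\lam(s)^\top\bm{y}(s)$. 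The bracket therefore equals $H(s,\x^*(s),v,\lam(s))-H(s,\x^*(s),\uvec^*(s),\lam(s))$. Dividing the inequality $J(\uvec_\varepsilon)\le J(\uvec^*)$ by $\varepsilon>0$ and letting $\varepsilon\to0$ shows this difference is $\le 0$. To handle all $v\in U$ at once on a common full-measure set of times, I first take $v$ in a countable dense subset of $U$, and then pass to the general case using continuity of $H$ in $\uvec$.

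The main obstacle is making the expansion $\x_\varepsilon=\x^*+\varepsilon\bm{y}+o(\varepsilon)$ rigorous, because the needle is not a small perturbation in sup norm. The plan has two parts. On $(s-\varepsilon,s]$, Gronwall gives $\|\x_\varepsilon-\x^*\|=O(\varepsilon)$, and the Lebesgue-point property identifies the accumulated jump at time $s$ as $\varepsilon\bm{y}(s)+o(\varepsilon)$. On $[s,t_1]$, I use $C^1$ dependence of solutions on initial data, applied to the ODE with control $\uvec^*$. Boundedness of controls, together with local Lipschitz continuity of $\gvec$ and $f$ (part of the implicit ``regularity sufficient'' hypothesis, e.g.\ $U$ bounded), supplies the uniform estimates. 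The same Lebesgue-point argument handles the running-cost contribution $\int_{s-\varepsilon}^{s}\big(f(t,\x_\varepsilon,v)-f(t,\x^*,\uvec^*)\big)\,dt$.
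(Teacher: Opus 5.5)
Your proof is correct, but it follows a different route from the paper. The notes state this theorem without proof, deferring to the classical result (``regularity sufficient for Pontryagin's principle''). The only derivation they offer is the Part~I paragraph that perturbs additively, $u_\varepsilon=u^*+\varepsilon h$, and reads off $\int_{t_0}^{t_1} H_u h\,dt\le 0$.

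The two routes share the same duality step. The paper's asserted identity $\frac{d}{d\varepsilon}J(u_\varepsilon)\big|_{0^+}=\int H_u h\,dt$ rests on exactly your computation $\frac{d}{dt}(\lam^\top\bm{y})=-f_{\x}\bm{y}$, with a forcing term $\gvec_{\uvec}h$ in place of your jump datum. What differs is the class of variations:
\begin{itemize}
\item \textbf{Additive variation (paper).} It needs $U$ convex so that $u^*+\varepsilon h$ stays admissible, and $H$ differentiable in $\uvec$. It yields only the local condition $H_u\in N_U(u^*)$, i.e.\ the sign conditions. This becomes the global $\argmax$ only when $H$ is concave in $\uvec$, as in the projection formula.
\item \textbf{Needle variation (yours).} It works for arbitrary $U$, including nonconvex or discrete sets, and needs only continuity in $\uvec$. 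It delivers the global pointwise maximum that the statement actually asserts. The price is the expansion for a perturbation that is small in $L^1$ but not in sup norm, which you handle correctly, plus an essential-boundedness hypothesis on $\uvec^*$ that the statement leaves implicit.
\end{itemize}

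Two minor remarks. First, under the standing $C^1$ assumption $f$ and $\gvec$ are continuous in $t$, so $\frac{1}{\varepsilon}\int_{s-\varepsilon}^{s}\gvec(t,\x_\varepsilon(t),v)\,dt\to\gvec(s,\x^*(s),v)$ by continuity alone, and likewise for $f$. The exceptional set is therefore just the non-Lebesgue points of $t\mapsto f(t,\x^*,\uvec^*)$ and $t\mapsto\gvec(t,\x^*,\uvec^*)$, which does not depend on $v$. Your countable-dense-subset step is harmless but unnecessary.

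Second, your argument uses the free terminal state essentially: you can fix $\lam$ before choosing any variation, and the cost multiplier is automatically normal. The endpoint-constrained examples in Part~II ($x_1(1)=1$, $z(1)=2$) lie outside this statement. Extending your proof to them would require a separation argument on the cone of needle variations.
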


\begin{remark}
For minimization problems, one can either minimize the Hamiltonian directly or maximize the negative Hamiltonian. Both conventions are used in the literature. What matters is consistency.
\end{remark}

\subsubsection{Componentwise Form}

If $\x=(x_1,\dots,x_n)^\top$, $\uvec=(u_1,\dots,u_m)^\top$, and $\lam=(\lambda_1,\dots,\lambda_n)^\top$, then
\[
H(t,\x,\uvec,\lam)
=
f(t,\x,\uvec)+\sum_{i=1}^{n}\lambda_i\,g_i(t,\x,\uvec),
\]
and the necessary conditions become
\begin{align}
\dot{x}_i &= \pdv{H}{\lambda_i}=g_i(t,\x,\uvec), \qquad i=1,\dots,n, \\
\dot{\lambda}_j &= -\pdv{H}{x_j}, \qquad j=1,\dots,n, \\
0 &= \pdv{H}{u_k} \quad \text{if } u_k \text{ is unconstrained and the optimum is interior}.
\end{align}

\subsubsection{Control Bounds}

If a scalar control component satisfies
\[
a_k \le u_k(t)\le b_k,
\]
then the stationarity condition is replaced by the projection rule
\[
u_k^*(t)=
\begin{cases}
a_k, & \text{if } \displaystyle \pdv{H}{u_k}(t,\x^*,\uvec^*,\lam)<0,\\[1ex]
\text{interior value solving } \displaystyle \pdv{H}{u_k}=0, & \text{if admissible},\\[1ex]
b_k, & \text{if } \displaystyle \pdv{H}{u_k}(t,\x^*,\uvec^*,\lam)>0.
\end{cases}
\]
This is the smooth analog of a bang-bang/boundary-interior structure.

\subsubsection{Endpoint Variants}

The transversality conditions depend on which endpoint values are fixed or free:

\begin{enumerate}[label=(\roman*)]
    \item If $x_i(t_1)$ is free and $\phi$ is present, then $\lambda_i(t_1)=\partial \phi/\partial x_i$.
    \item If $x_i(t_1)$ is fixed, then no transversality condition is imposed on $\lambda_i(t_1)$.
    \item If there is no terminal payoff, then $\phi\equiv 0$ and free terminal states imply $\lambda_i(t_1)=0$.
\end{enumerate}

\subsection{A Worked Multi-State Example}

Consider the minimization problem
\begin{equation}
\min_{u(\cdot)} \int_0^1 \left( x_2(t)+u(t)^2 \right)\,dt
\end{equation}
subject to
\begin{align}
\dot{x}_1(t) &= x_2(t), \qquad x_1(0)=0,\quad x_1(1)=1,\\
\dot{x}_2(t) &= u(t), \qquad x_2(0)=0.
\end{align}

\subsubsection{Hamiltonian and Adjoint System}

For minimization, use
\[
H=x_2+u^2+\lambda_1 x_2+\lambda_2 u.
\]
The adjoint equations are
\begin{align}
\dot{\lambda}_1 &= -\pdv{H}{x_1}=0, \\
\dot{\lambda}_2 &= -\pdv{H}{x_2}=-(1+\lambda_1).
\end{align}
Hence
\[
\lambda_1(t)\equiv C, \qquad
\lambda_2(t)=-(C+1)(t-1),
\]
where the terminal condition $\lambda_2(1)=0$ was used.

The stationarity condition gives
\[
0=\pdv{H}{u}=2u+\lambda_2
\quad \Longrightarrow \quad
u^*(t)=-\frac{\lambda_2(t)}{2}=\frac{C+1}{2}(t-1).
\]

\subsubsection{State Recovery}

Then
\[
\dot{x}_2(t)=\frac{C+1}{2}(t-1)
\]
implies
\[
x_2(t)=\frac{C+1}{2}\left(\frac{t^2}{2}-t\right),
\]
using $x_2(0)=0$. Next,
\[
\dot{x}_1=x_2
\]
gives
\[
x_1(t)=\frac{C+1}{2}\left(\frac{t^3}{6}-\frac{t^2}{2}\right),
\]
using $x_1(0)=0$. Imposing $x_1(1)=1$ yields $C=-7$, and therefore
\begin{align}
u^*(t) &= 3-3t,\\
x_2^*(t) &= 3t-\frac{3}{2}t^2,\\
x_1^*(t) &= \frac{3}{2}t^2-\frac{1}{2}t^3.
\end{align}

\begin{remark}
The essential point is structural: with $n$ states one introduces $n$ adjoints. The dimension of the two-point boundary value problem doubles relative to the state dimension.
\end{remark}

\subsection{State Elimination and Problem Reduction}

In multi-state models it is often possible to remove superfluous state variables before writing the adjoint system.

\subsubsection{Structural Elimination}

Suppose the problem is
\[
\max_{u(\cdot)} \int_{t_0}^{t_1} f(t,x_1,x_2,u)\,dt
\]
subject to
\begin{align}
\dot{x}_1 &= g_1(t,x_1,x_2,u),\\
\dot{x}_2 &= g_2(t,x_1,x_2,u),\\
\dot{x}_3 &= g_3(t,x_1,x_2,x_3,u),
\end{align}
with given initial data. If neither the payoff nor the first two state equations depend on $x_3$, then the pair $(x_1,x_2)$ with control $u$ forms a closed optimal control problem. One solves for $(x_1^*,x_2^*,u^*)$ first and then recovers $x_3^*$ from its forward ODE.

\subsubsection{Elimination via Explicit Integration}

A second mechanism occurs when one state can be solved explicitly and substituted into the objective.

\begin{example}[Bioreactor-style reduction]
Suppose
\begin{align}
\dot{x}(t) &= \frac{G\,u(t)}{H+u(t)}x(t)-D x(t)^2,\\
\dot{z}(t) &= -Kx(t)z(t),
\end{align}
with objective
\[
\min_{u(\cdot)} \left\{ \ln z(T) + \int_0^T A\,u(t)\,dt \right\}.
\]
The second state solves explicitly as
\[
z(t)=z_0 \exp\!\left(-K\int_0^t x(s)\,ds\right),
\]
hence
\[
\ln z(T)=\ln z_0 - K\int_0^T x(s)\,ds.
\]
Since $\ln z_0$ is constant, the problem is equivalent to
\[
\max_{u(\cdot)} \int_0^T \left(Kx(t)-A u(t)\right)\,dt
\]
subject only to the scalar state equation for $x$. This removes the state $z$ and also removes one adjoint variable from the necessary conditions.
\end{example}

\subsection{Finite-Horizon Continuous-Time LQR}

We now specialize to the linear quadratic regulator, one of the few classes for which a nearly closed-form structural solution exists.

\subsubsection{Problem Statement}

Let
\begin{equation}
\dot{x}(t)=A(t)x(t)+B(t)u(t), \qquad x(0)=x_0,
\end{equation}
where
\[
x(t)\in\R^n,\qquad u(t)\in\R^m,
\]
and $A(t)\in\R^{n\times n}$, $B(t)\in\R^{n\times m}$.

The finite-horizon quadratic objective is
\begin{equation}
J(u)=\frac{1}{2}x(T)^\top M x(T)+\frac{1}{2}\int_0^T
\left( x(t)^\top Q(t)x(t)+u(t)^\top R(t)u(t)\right)\,dt,
\end{equation}
where
\[
M=M^\top \succeq 0,\qquad Q(t)=Q(t)^\top \succeq 0,\qquad R(t)=R(t)^\top \succ 0.
\]
The positive definiteness of $R(t)$ ensures invertibility.

\subsubsection{Hamiltonian Derivation}

The Hamiltonian is
\[
H=\frac{1}{2}x^\top Qx+\frac{1}{2}u^\top Ru+\lambda^\top(Ax+Bu).
\]
The stationarity condition is
\[
0=\pdv{H}{u}=Ru+B^\top \lambda,
\]
hence
\begin{equation}
u^*(t)=-R(t)^{-1}B(t)^\top \lambda(t).
\end{equation}
The adjoint equation is
\begin{equation}
\dot{\lambda}(t)=-Q(t)x(t)-A(t)^\top \lambda(t),
\qquad
\lambda(T)=Mx(T).
\end{equation}

\subsubsection{Riccati Ansatz}

Assume
\begin{equation}
\lambda(t)=S(t)x(t),
\end{equation}
where $S(t)\in\R^{n\times n}$ is to be determined. Differentiating and substituting the state and adjoint equations yields the Riccati differential equation
\begin{equation}
-\dot{S}(t)=A(t)^\top S(t)+S(t)A(t)-S(t)B(t)R(t)^{-1}B(t)^\top S(t)+Q(t),
\end{equation}
with terminal condition
\begin{equation}
S(T)=M.
\end{equation}

\begin{theorem}[Finite-horizon LQR feedback law]
If $S(t)$ solves the Riccati equation above, then the optimal control is
\begin{equation}
u^*(t)=-R(t)^{-1}B(t)^\top S(t)x(t).
\end{equation}
Thus the optimal control is a linear state feedback, with gain matrix
\[
K(t)=R(t)^{-1}B(t)^\top S(t).
\]
\end{theorem}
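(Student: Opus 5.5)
The cleanest route avoids relying on Pontryagin's principle alone, which only gives necessary conditions. Instead, I will prove optimality of the feedback law directly by a completion-of-squares argument along the Riccati solution. Throughout, $S(t)$ solves the Riccati equation on $[0,T]$ with $S(T)=M$. I will first check that $S$ is symmetric: the transpose $S^\top$ solves the same terminal value problem, because the right-hand side is invariant under transposition when $Q$ and $R$ are symmetric. By uniqueness for the locally Lipschitz ODE, $S=S^\top$.

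\textbf{Completion of squares.} For an arbitrary admissible control $u$ (say $L^2$) with state $x$, I will differentiate the quadratic function $t\mapsto x(t)^\top S(t)x(t)$. Using $\dot x=Ax+Bu$ gives
\[
\frac{d}{dt}\big(x^\top S x\big)=x^\top(\dot S+A^\top S+SA)x+2u^\top B^\top S x .
\]
Substituting the Riccati equation for $\dot S$ turns this into
\[
x^\top\big(SBR^{-1}B^\top S-Q\big)x+2u^\top B^\top Sx .
\]
Adding $x^\top Qx+u^\top Ru$ to both sides, I will verify the pointwise identity
\[
x^\top Qx+u^\top Ru+\frac{d}{dt}(x^\top Sx)=(u+R^{-1}B^\top Sx)^\top R\,(u+R^{-1}B^\top Sx).
\]
This is a direct expansion of the right-hand side and is the key algebraic step.

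\textbf{Value and uniqueness.} Integrating the identity over $[0,T]$ and using $S(T)=M$ yields
\[
J(u)=\tfrac12 x_0^\top S(0)x_0+\tfrac12\int_0^T (u+R^{-1}B^\top Sx)^\top R\,(u+R^{-1}B^\top Sx)\,dt .
\]
Since $R(t)\succ0$, the integral is nonnegative. It vanishes if and only if $u=-R^{-1}B^\top Sx$ almost everywhere. The closed-loop system $\dot x=(A-BR^{-1}B^\top S)x$, $x(0)=x_0$, is linear with continuous coefficients, so it has a unique solution. Hence $u^*$ is admissible and attains the lower bound $\tfrac12 x_0^\top S(0)x_0$. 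It is also the unique minimizer. The gain $K(t)=R^{-1}B^\top S$ can then be read off.

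\textbf{Main obstacle.} The delicate point is the hypothesis itself: that $S$ exists on all of $[0,T]$, since Riccati equations can blow up in finite time. The statement assumes this, so the argument above suffices as written. If justification is wanted, I would argue as follows. On any interval $[\tau,T]$ where $S$ exists, the identity gives $\tfrac12 x^\top S(\tau)x\le J_\tau(0)$ for the problem started at time $\tau$, and $S(\tau)\succeq 0$ since the cost is nonnegative. This gives a two-sided bound on $S(\tau)$ that is uniform in $\tau$, which rules out blow-up.

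I would also note the consistency with the preceding derivation. The adjoint $\lambda=Sx$ satisfies $\dot\lambda=-Qx-A^\top\lambda$ and $\lambda(T)=Mx(T)$, so this control is exactly the Pontryagin candidate $u^*=-R^{-1}B^\top\lambda$. Strict convexity of $J$ (from $R\succ0$ and $Q,M\succeq0$) gives the same uniqueness conclusion.
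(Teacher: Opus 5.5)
Your proof is correct, and it takes a genuinely different route from the paper.

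The paper reaches the feedback law through the necessary conditions. It solves $H_u=0$ to get $u^*=-R^{-1}B^\top\lambda$. It then decouples the state--adjoint two-point boundary-value problem with the sweep ansatz $\lambda=Sx$, which forces $S$ to satisfy the Riccati equation with $S(T)=M$. Read backwards, this shows only that the closed-loop control, together with $\lambda=Sx$, satisfies the Pontryagin system, i.e.\ that it is an \emph{extremal}. The paper never argues that this extremal is actually the minimizer. That step would need Mangasarian-type sufficiency (from convexity of $H$ in $(x,u)$ and of $\tfrac12 x^\top Mx$), or an existence theorem combined with uniqueness of extremals. The paper also takes for granted that $S$ exists on all of $[0,T]$.

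Your completion-of-squares identity is instead a verification argument. In effect it shows that $V(t,x)=\tfrac12 x^\top S(t)x$ solves the HJB equation from the paper's closing remarks, with $\nabla_x V=Sx=\lambda$. From this you get, in one stroke and without invoking PMP or existence theory:
\begin{enumerate}[label=(\roman*)]
\item sufficiency,
\item uniqueness of the minimizer,
\item the optimal value $\tfrac12 x_0^\top S(0)x_0$.
\end{enumerate}
Your a priori bound $0\preceq S(\tau)\preceq CI$, which relies on the symmetry you established first, closes the global-existence gap the paper leaves open.

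What your route does not supply is motivation. It presupposes the Riccati equation, whereas the paper's ansatz explains where that equation comes from and why $\lambda=Sx$ is the right adjoint--state relation. The two approaches are complementary: the paper derives the candidate, and your identity certifies that it is optimal. Your closing consistency remark ties them together correctly.
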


The Riccati equation trades a forward-backward Hamiltonian system for a backward matrix ODE. After solving $S(t)$ backward from $T$ to $0$, one integrates the closed-loop state equation
\[
\dot{x}(t)=\left(A(t)-B(t)R(t)^{-1}B(t)^\top S(t)\right)x(t).
\]

\subsubsection{Scalar Example}

Consider
\[
\min_{u(\cdot)} \frac{1}{2}\int_0^T \left(x(t)^2+u(t)^2\right)\,dt
\]
subject to
\[
\dot{x}(t)=u(t), \qquad x(0)=x_0.
\]
Here $A=0$, $B=1$, $Q=1$, $R=1$, and $M=0$, so the Riccati equation becomes
\[
-\dot{S}=1-S^2,\qquad S(T)=0.
\]
This scalar ODE integrates to
\[
S(t)=\frac{1-e^{2(t-T)}}{1+e^{2(t-T)}}.
\]
The optimal feedback law is
\[
u^*(t)=-S(t)x(t),
\]
and therefore the optimal state solves
\[
\dot{x}(t)=-S(t)x(t).
\]
A direct integration yields
\[
x^*(t)=x_0\,\frac{e^t+e^{2T-t}}{1+e^{2T}},
\qquad
u^*(t)=x_0\,\frac{e^t-e^{2T-t}}{1+e^{2T}}.
\]

\subsection{Higher-Order Differential Equations as Systems}

Pontryagin’s principle is formulated for first-order systems. Higher-order controlled differential equations must therefore be lifted to state-space form.

\subsubsection{General Reduction}

Suppose the problem is
\[
\max_{u_1,\dots,u_m} \int_{t_0}^{t_1}
f\bigl(t,x,\dot{x},\dots,x^{(n)},u_1,\dots,u_m\bigr)\,dt
\]
subject to
\[
x^{(n+1)}(t)=g\bigl(t,x,\dot{x},\dots,x^{(n)},u_1,\dots,u_m\bigr),
\]
with initial conditions
\[
x(t_0)=\alpha_1,\quad \dot{x}(t_0)=\alpha_2,\quad \dots,\quad x^{(n)}(t_0)=\alpha_{n+1}.
\]

Define
\[
x_1=x,\qquad x_2=\dot{x},\qquad \dots,\qquad x_{n+1}=x^{(n)}.
\]
Then the system becomes
\begin{align}
\dot{x}_1 &= x_2,\\
\dot{x}_2 &= x_3,\\
&\vdots \\
\dot{x}_n &= x_{n+1},\\
\dot{x}_{n+1} &= g(t,x_1,\dots,x_{n+1},u_1,\dots,u_m),
\end{align}
with the original objective now rewritten in terms of $(x_1,\dots,x_{n+1})$.

\subsubsection{Second-Order Example}

Consider
\[
\min_{u(\cdot)} \frac{1}{2}\int_0^\pi \left(u(t)^2-x(t)^2\right)\,dt
\]
subject to
\[
x''(t)=u(t), \qquad x(0)=1,\quad x'(0)=1.
\]
Set
\[
x_1=x,\qquad x_2=x'.
\]
Then
\begin{align}
\dot{x}_1 &= x_2,\qquad x_1(0)=1,\\
\dot{x}_2 &= u,\qquad x_2(0)=1.
\end{align}
The Hamiltonian is
\[
H=\frac{1}{2}u^2-\frac{1}{2}x_1^2+\lambda_1 x_2+\lambda_2 u.
\]
The first-order conditions are
\begin{align}
0 &= \pdv{H}{u}=u+\lambda_2 \quad \Longrightarrow \quad u^*=-\lambda_2,\\
\dot{\lambda}_1 &= -\pdv{H}{x_1}=x_1,\\
\dot{\lambda}_2 &= -\pdv{H}{x_2}=-\lambda_1,
\end{align}
with terminal conditions $\lambda_1(\pi)=\lambda_2(\pi)=0$ if the terminal state is free and there is no terminal payoff.

This example is a useful template: every higher-order optimal control problem can be converted to a first-order Hamiltonian system of larger dimension.

\subsection{Isoperimetric Constraints}

Integral constraints on the trajectory or the control are common in applications: fixed total fuel, fixed cumulative dosage, fixed resource expenditure, or a prescribed total harvested biomass.

\subsubsection{General Form}

Consider
\begin{equation}
\max_{u(\cdot)} \left\{
\int_{t_0}^{t_1} f(t,x(t),u(t))\,dt + \phi(x(t_1))
\right\}
\end{equation}
subject to
\begin{align}
\dot{x}(t) &= g(t,x(t),u(t)), \qquad x(t_0)=x_0,\\
\int_{t_0}^{t_1} h(t,x(t),u(t))\,dt &= B,\\
a &\le u(t)\le b.
\end{align}

\subsubsection{Auxiliary-State Conversion}

Define the additional state
\[
z(t)=\int_{t_0}^{t} h(s,x(s),u(s))\,ds.
\]
Then
\[
\dot{z}(t)=h(t,x(t),u(t)), \qquad z(t_0)=0,\qquad z(t_1)=B.
\]
Thus the isoperimetric constraint is converted into an endpoint constraint, and the problem becomes a standard multi-state problem.

\begin{remark}
This conversion is conceptually simple but extremely important. It is the standard way to bring integral equality constraints into the scope of Pontryagin’s principle without introducing an external Lagrange multiplier at the functional level.
\end{remark}

\subsubsection{Example}

Consider
\[
\min_{u(\cdot)} \frac{1}{2}\int_0^1 u(t)^2\,dt
\]
subject to
\begin{align}
\dot{x}(t)&=u(t),\qquad x(0)=0,\quad x(1)=1,\\
\int_0^1 x(t)\,dt&=2.
\end{align}

Introduce
\[
\dot{z}(t)=x(t),\qquad z(0)=0,\quad z(1)=2.
\]
The Hamiltonian becomes
\[
H=\frac{1}{2}u^2+\lambda_1 u+\lambda_2 x.
\]
Adjoint equations:
\begin{align}
\dot{\lambda}_1 &= -\pdv{H}{x}=-\lambda_2,\\
\dot{\lambda}_2 &= -\pdv{H}{z}=0.
\end{align}
Hence $\lambda_2=C$ and
\[
\lambda_1(t)=k-Ct.
\]
Stationarity gives
\[
0=\pdv{H}{u}=u+\lambda_1
\quad \Longrightarrow \quad
u^*(t)=Ct-k.
\]
Integrating twice,
\begin{align}
x(t)&=\frac{C}{2}t^2-kt,\\
z(t)&=\frac{C}{6}t^3-\frac{k}{2}t^2.
\end{align}
Using $x(1)=1$ and $z(1)=2$, one finds
\[
C=-18,\qquad k=-10.
\]
Therefore
\begin{align}
u^*(t)&=10-18t,\\
x^*(t)&=10t-9t^2.
\end{align}

\subsection{Advanced remarks for PhD readers}

\subsubsection{On Existence Versus Necessary Conditions}

Pontryagin’s principle provides necessary conditions, not existence. Existence typically requires separate arguments involving:
\begin{enumerate}[label=(\roman*)]
    \item compactness of admissible controls,
    \item convexity/coercivity of the running cost,
    \item growth bounds for the dynamics,
    \item lower semicontinuity of the objective.
\end{enumerate}
In nonlinear biological and engineering models, failure of convexity can create multiple extremals, singular arcs, or nonexistence in the classical sense.

\subsubsection{Second-order issues}
When the Hamiltonian is affine in the control and switching functions have high-order zeros, first-order conditions may be inconclusive. Singular arc analysis, generalized Legendre--Clebsch conditions, and second variation arguments become necessary.

\subsubsection{Relationship to HJB}
The Hamiltonian maximization condition in PMP matches the optimization appearing in the Hamilton--Jacobi--Bellman equation:
\[
-V_t(t,x)=\max_{u\in U}\big\{f(t,x,u)+\nabla_x V(t,x)^\top g(t,x,u)\big\}.
\]
Formally, the adjoint plays the role $\lambda(t)=\nabla_x V(t,x^*(t))$ along an optimal trajectory when the value function is sufficiently smooth.

\subsubsection{Singular Controls}

When the Hamiltonian is affine in a control component,
\[
H(\cdot)=\cdots + \psi(t)u,
\]
the stationarity condition does not determine $u$ in the interior. One must then differentiate the switching function $\psi$ until the control appears explicitly. Singular arc analysis lies beyond the scope of these notes, but students should recognize the warning sign: a control enters linearly and boundedly, yet the switching function vanishes on an interval.

\subsubsection{Numerics}

General nonlinear multi-state optimal control problems are typically solved numerically by one of three paradigms:
\begin{enumerate}[label=(\alph*)]
    \item \textbf{Indirect methods:} solve the state-adjoint boundary value problem.
    \item \textbf{Direct methods:} discretize the control/state and solve a nonlinear program.
    \item \textbf{Dynamic programming / HJB methods:} feasible only in low dimension because of the curse of dimensionality.
\end{enumerate}
LQR is exceptional because the Riccati equation yields an analytically structured feedback solution.

\subsubsection{Take-home principles}

\begin{enumerate}[label=(\roman*)]
    \item The state and adjoint equations keep their Hamiltonian form under box constraints.
    \item The control law changes from \emph{stationarity} to \emph{pointwise optimization over the admissible set}.
    \item For scalar controls, sign conditions on $H_u$ determine whether the optimizer lies at the lower bound, upper bound, or in the interior.
    \item For quadratic Hamiltonians, the optimal constrained control is the projection of the unconstrained optimizer onto the admissible interval.
    \item Bang-bang structure emerges when the Hamiltonian is affine in the control.
    \item One must enforce bounds during the solve; clipping an unconstrained solution afterward is generally wrong.
    \item In indirect numerics, bounded controls are implemented by a projected control update inside each forward-backward sweep iteration.
\end{enumerate}

\section{Bibliographic remarks and Acknowledgment}
Please see the relevant references: \cite{king_graded_1982,
cohen_maximizing_1971,
barbu_analysis_1993,
barbu_mathematical_1994,
li_optimal_1995,
liu_bidirectional_2025,
jacobson_extensions_1980,
yong_stochastic_1999,
wang_multi-strategy_2025,
wade_introduction_2010,
mooney_course_1999,
edelstein-keshet_mathematical_2005,
jones_differential_2009,
kot_elements_2001,
murray_mathematical_2003,
pontryagin_mathematical_2018,
yu_pattern_2026,
rudin_real_1987,
stein_real_2005,
wang_analysis_2025,
clarke_optimization_1990,
fleming_deterministic_1975,
gao_rolling_2022,
kamien_dynamic_2003,
macki_introduction_1982,
cesari_optimizationtheory_1983,
lukes_differential_1982,
filippov_certain_1962,
edelstein-keshet_mathematical_1991,
berkovitz_optimal_1976,
sethi_optimal_2000,
lewis_optimal_1986,
burden_numerical_1997,
wang_algebraicspectral_2026,
de_feo_optimal_2024,
wang_path_2022,
federico_linear-quadratic_2025,
lin_controlled_2020,
wang_global_2021,
bayen_hybrid_2024,
wang_linear-quadratic_2023,
huang_infinite_2025,
dou_time-inconsistent_2020,
liu_characterizations_2022,
liang_global_2025,
wang_spike_2023,
trelat_exponential_2025,
calvia_state_2021,
hasenohr_computer-assisted_2025,
wang_analysis_2025-1,
conforti_kl_2025,
szpruch_optimal_2024,
li_neural_2024,
archibald_numerical_2024,
tang_exploratory_2022,
giegrich_convergence_2024,
han_deep_2026,
liu_hilbert_2025,
cai_soc-martnet_2025,
mayorga_finite_2023,
wang_damage-structured_2026,
zhou_policy_2025,
meng_general_2025,
lew_sample_2024,
huang_sublinear_2025,
wang_stochastic_2025}.

\bibliographystyle{unsrtnat}
\bibliography{references}

\end{document}